\newtheorem{lem}{Lemma}
\newtheorem{thm}{Theorem}
\newtheorem{cor}{Corollary}
\begin{document}

\title{The Independent Chip Model and Risk Aversion}
\author{George T.~Gilbert\\ Texas Christian University\\
g.gilbert@tcu.edu}
\date{November 2009}
\maketitle

\begin{abstract}
We consider the Independent Chip Model (ICM) for expected value in
poker tournaments.  Our first result is that participating in a fair
bet with one other player will always lower one's expected value under
this model.  Our second result is that the expected value for players
not participating in a fair bet between two players always increases.
We show that neither result necessarily holds for a fair bet among
three or more players.
\end{abstract}

\thispagestyle{empty}

\section{Introduction}

The analysis of expected value in poker tournaments is more complex than for cash 
games. By this, we mean that chips in a cash game are equivalent to 
cash. On the other hand, the expected value of chips in a tournament 
are related to expected value in cash in a nonlinear way.

In a typical (freezeout) tournament, each player begins with the same
number of chips.  Once a player is out of chips, he or she is out of
the tournament.  Play continues until one player has all of the chips.
In most tournaments, however, the winning player gets only a portion
of the prize money.  The last player eliminated gets second place
money, the next-to-last eliminated gets third place money, and so
forth.  Effectively, the winner is forced to give back some of the
chips he or she has won.  Consequently, most players should play in a
somewhat risk averse manner, the extent depending on his or her
ability.

\section{Modeling Poker Tournaments}

In models of poker tournaments where all players have ``equal
abilities'' and ``equal opportunities,'' the probability a player
finishes first equals the fraction of the total chips in play that he
or she holds.  The model where a player wins or loses a single chip
with probability $1/2$ each is the standard Gambler's Ruin or
random walk problem going back to Huygens
\hyperlink{huy}{\cite{huy}}.  In fact, he further considers
constant, but unequal, probabilities of winning or losing, which can be
interpreted as introducing skill into the model.  See also
\hyperlink{feller}{\cite{feller}}.

The probability of finishing first equals the fraction of the total
chips in play held by the player much more generally, for instance if
a player's expected gain in chips is zero for each hand. If the 
player's proportion of all chips is $x$ and probability of winning 
the tournament is $f(x)$, then
$$(1-f(x))(-x)+f(x)(1-x)=0,$$
from which we see $f(x)=x$. Henke \hyperlink{hen}{\cite{hen}} 
tested this model on data from World Poker Tour final tables. There was 
reasonable agreement between theory and data. Nevertheless, 
the model modestly overestimated the probabilities of small stacks winning and 
modestly underestimated the probabilities of large stacks winning. This could 
be due to flaws in the model or to differences in the skill of 
players that led to the disparities in stack sizes.

In contrast, the probability of finishing second, third, and so forth
is very dependent on the model.  Even in the particular model where
two players are chosen at random and each wins or loses a single chip
from the other with probability $1/2$, a player's probability of
finishing second will depend not just on the fraction of chips held by
each player but on the actual number of chips held by the players.  In
a more general setting, Swan and Bruss \hyperlink{sw-br}{\cite{sw-br}}
give a recursive solution for the probability a particular player is
the first to go broke in terms of Markov processes and unfolding.
Unfortunately, it is not a computationally practical method for the
repeated calculations needed to analyze poker tournaments.  In the
case of three players, the problem is a discrete Dirichlet problem on
a triangle.  Ferguson \hyperlink{tferg}{\cite{tferg}} solved the
players' probabilities of finishing first, second, and third for
Brownian motion --- the limit as the number of chips increases to
infinity.  Employing a Schwarz-Christoffel transformation, he
expresses the answer in terms of the inverse of the incomplete beta
function, so it is not easy to actually compute these limiting
probabilities.  Due to the specialized techniques, even this does not
generalize to more than three players \hyperlink{blt}{\cite{blt}}.

In the early to middle stages of a tournament, the primary factor in
determining a player's expected cash winnings is his or her chip
count.  Thus, one could model expected winnings as a function of
the fraction of the total number of chips the player holds.  There are
two especially simple models of this.  One can use the biased random
walk of single steps to model expected winnings rather than the
probability of finishing first.  Alternatively, Chen and Ankenman,
\hyperlink{ca}{\cite{ca}} and \hyperlink{ca2}{\cite{ca2}}, propose
a model to estimate the probability of finishing first by assuming the
probability of doubling one's chips before going a broke is constant.
Again, one can instead consider expected winnings under the same
assumption.  It would be appropriate to call these the small-pot model
and the big-pot model.  They were developed with some preliminary
comparisons with data from online poker tournaments in
\hyperlink{gtg}{\cite{gtg}}.  Although skill is naturally 
incorporated into these models, neither would be appropriate
late in a poker tournament when the number of chips held by a
player's opponents becomes a critical factor in both determining the
player's expected winnings and deciding on the optimal play in each hand
of the tournament.

\section{The Independent Chip Model (ICM)}

The best-known model for tournaments between players of (roughly) equal 
abilities is the Independent Chip Model (ICM). Although it did not 
arise from any model for the movement of chips from player to player, this model is often 
used by serious poker players for analysis of the late stages of 
tournaments. 
Under the ICM, a player's probability of finishing first 
is the fraction of the total chips in play he or she holds. 
Recursively, the conditional probability of finishing in $k$th position 
given the $k-1$ players finishing 1st through $(k-1)$st is the fraction 
of the chips held by the player once the chips of the $(k-1)$ players 
finishing higher have been taken out of play.
Thus, if the fractions of chips held by players 1 through $k$ are 
$x_{1}, \ldots, x_{k}$, the probability they finish 1st through $k$th 
is
\begin{equation*}
    p_{k}(x_{1},\ldots,x_{k})=\dfrac{x_{1}x_{2}\cdots 
x_{k}}{\left(1-x_{1}\right)\left(1-x_{1}-x_{2}\right)
\cdots\left(1-x_{1}-x_{2}-\ldots-x_{k-1}\right)}.
\end{equation*}

Ganzfried and Sandholm \hyperlink{ganz}{\cite{ganz}} have used this
model to analyze the effects of position in a simulated three-player
Texas hold'em tournament under ``jam/fold'' strategies.  This author
has data from online single table tournaments that, at
first glance, suggest the ICM is a reasonable approximation of the
probabilities in the aggregate.  We mention in passing that the ICM is
essentially the model used since 1990 to determine the first three picks in the
National Basketball Association draft.

\section{Risk Aversion Under the ICM}

We introduce notation we will use in all that follows. Let 
$q_{k}(x;y;z_{1},..,z_{k})$ denote the probability, under the ICM,  
that a given player with fraction $y$ of the chips finishes first, one
with fraction $x$ finishes somewhere among the top $k+2$ players,  
with the remaining $k$ places taken by 
given players with fractions $z_{1}, \ldots, z_{k}$, who finish in this 
relative order.  Thus, by definition,
\begin{multline}\label{float}\hypertarget{float}
q_{k}(x;y;z_{1},..,z_{k})=p_{k+2}(y,x,z_{1},\ldots,z_{k})
+p_{k+2}(y,z_{1},x,\ldots,z_{k})\\+\cdots+p_{k+2}(y,z_{1},\ldots,z_{k-1},x,z_{k})
+p_{k+2}(y,z_{1},\ldots,z_{k},x).
\end{multline}

We begin with two lemmas we will use in proving both of our theorems.

\begin{lem}\label{partfrac}
    For every integer $k\ge0$,
\begin{multline*}
q_{k}(x;y;z_{1},..,z_{k})
    =\dfrac{y}{x+y}\thinspace p_{k+1}(x+y,z_{1},\ldots,z_{k})
    \\
    -p_{k+2}(y,z_{1},\ldots,z_{k},1-x-y-z_{1}-\cdots-z_{k}).
\end{multline*}    
\end{lem}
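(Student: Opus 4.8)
The plan is to expand $q_k$ from its defining expression~\eqref{float} and collapse the resulting sum by partial fractions (which, presumably, is the source of the label). First I would note that all $k+1$ summands $p_{k+2}(y,z_1,\ldots,z_j,x,z_{j+1},\ldots,z_k)$ share the same numerator $N:=xy\,z_1\cdots z_k$, since permuting the arguments of $p_{k+2}$ leaves the product in the numerator unchanged. Writing $s_i:=z_1+\cdots+z_i$ (with $s_0:=0$) and setting $a_i:=1-y-s_i$ and $b_i:=a_i-x=1-x-y-s_i$, a direct reading of the denominator of $p_{k+2}$ gives, for the term in which $x$ is inserted immediately after $z_j$, the value $N/(a_0\cdots a_j\,b_j\cdots b_{k-1})$. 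Summing over $j$ yields
\begin{equation*}
q_k(x;y;z_1,\ldots,z_k)=N\sum_{j=0}^{k}\frac{1}{a_0a_1\cdots a_j\,b_jb_{j+1}\cdots b_{k-1}},
\end{equation*}
with the convention that the empty $b$-product (occurring at $j=k$) equals $1$.

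Next I would use $a_j-b_j=x$, equivalently $\tfrac{1}{a_jb_j}=\tfrac1x\bigl(\tfrac1{b_j}-\tfrac1{a_j}\bigr)$, to split the pair of factors carrying the common index $j$. Setting $R_j:=1/(a_0\cdots a_{j-1}\,b_j\cdots b_{k-1})$, this identity rewrites each summand with $0\le j\le k-1$ as $\tfrac1x(R_j-R_{j+1})$, so that those terms telescope to $\tfrac1x(R_0-R_k)$. The final summand $j=k$ carries no factor $b_k$ and hence does not fit this pattern; peeling it off separately and observing that it equals $R_{k+1}$ gives $q_k=N\bigl[\tfrac1x(R_0-R_k)+R_{k+1}\bigr]$.

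Finally I would match the three pieces against the target. Reading off $p_{k+1}(x+y,z_1,\ldots,z_k)$ shows it has numerator $(x+y)z_1\cdots z_k$ and denominator $b_0\cdots b_{k-1}$, whence $\tfrac{N}{x}R_0=\tfrac{y}{x+y}\,p_{k+1}(x+y,z_1,\ldots,z_k)$. For the other two, I would combine $NR_{k+1}-\tfrac{N}{x}R_k$ over the common denominator $a_0\cdots a_k$; the numerator becomes $y\,z_1\cdots z_k\,(x-a_k)$, and since $x-a_k=-(1-x-y-s_k)$, this is exactly $-\,p_{k+2}(y,z_1,\ldots,z_k,\,1-x-y-z_1-\cdots-z_k)$. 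Assembling the three pieces produces the stated identity.

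The step I expect to be most delicate is the bookkeeping at the two ends of the telescoping sum: the $j=k$ summand cannot be split by the partial-fraction identity, so it must be handled on its own and then recombined with the leftover $R_k$. Recognizing that this recombination is precisely $p_{k+2}(y,z_1,\ldots,z_k,r)$ with $r=1-x-y-z_1-\cdots-z_k$, via $x-a_k=-r$, is the one genuinely non-mechanical move; the rest is index management together with the empty-product convention. As a safeguard I would verify the base case $k=0$, where the claim reduces to $p_2(y,x)=\tfrac{y}{x+y}p_1(x+y)-p_2(y,1-x-y)$, with both sides equal to $xy/(1-y)$. I note that a fully probabilistic proof is also available, realizing the ICM as a Plackett--Luce model (independent exponential finishing times with rates equal to chip fractions) and reading both sides as probabilities of explicit events; that route, however, would first require establishing the exponential-race representation, which the paper has not set up, so I would prefer the self-contained algebraic argument above.
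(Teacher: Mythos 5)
Your argument is correct --- I checked the indexing and it holds up: the summand of (\ref{float}) in which $x$ is inserted after $z_j$ does have denominator $a_0\cdots a_j\,b_j\cdots b_{k-1}$, the splitting $\tfrac{1}{a_jb_j}=\tfrac1x\bigl(\tfrac1{b_j}-\tfrac1{a_j}\bigr)$ telescopes the first $k$ summands to $\tfrac1x(R_0-R_k)$, and the recombination $NR_{k+1}-\tfrac{N}{x}R_k=-p_{k+2}(y,z_1,\ldots,z_k,1-x-y-z_1-\cdots-z_k)$ via $x-a_k=-(1-x-y-s_k)$ is exactly right (and everything degenerates correctly at $k=0$, where the telescoping range is empty). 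However, it is a genuinely different route from the paper's. The paper proves the lemma by induction on $k$: it verifies $k=0$ directly and then uses the recursion $q_{k+1}(x;y;z_1,\ldots,z_{k+1})=q_k(x;y;z_1,\ldots,z_k)\cdot z_{k+1}/(1-x-y-z_1-\cdots-z_k)+p_{k+3}(y,z_1,\ldots,z_{k+1},x)$, obtained by peeling off the last summand of (\ref{float}); it also prefaces the induction by reading the correction term probabilistically, as the probability that the players with fractions $y,z_1,\ldots,z_k$ take the first $k+1$ places while someone other than the player with fraction $x$ takes $(k+2)$nd --- which is close in spirit to the probabilistic alternative you sketch at the end, though the paper uses it only as motivation, not as proof. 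What your computation buys is that it \emph{derives} the closed form rather than verifying a formula one must already have guessed, and it exposes the identity as an honest partial-fractions collapse; what the paper's induction buys is brevity and an interpretation of the two right-hand terms that it reuses when applying the lemma in both theorems.
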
  
\begin{proof}[Proof of Lemma]
We note that the second term is the probability that players with 
chip fractions $y, 
z_{1},\ldots,z_{k}$ finish in the first $k+1$ places and that anyone 
other than the player with fraction $x$ finishes in $(k+2)$nd place. 

We prove the lemma by induction. For $k=0$, we have
$$q_{0}(x;y)=\dfrac{yx}{1-y}=\dfrac{y}{x+y}(x+y)-\dfrac{y(1-x-y)}{1-y}.$$
We now assume the identity for $k$ and prove it for $k+1$.
Applying the inductive hypothesis yields
\begin{align*}
    &q_{k+1}(x;y;z_{1},\ldots,z_{k+1})\\
    &=    
    q_{k}(x;y;z_{1},\ldots,z_{k})\thinspace\dfrac{z_{k+1}}{1-x-y-z_{1}-\cdots-z_{k}}+p_{k+3}(y,z_{1},\ldots,z_{k+1},x)\\
  &=\left[\dfrac{y}{x+y}\thinspace 
  p_{k+1}(x+y,z_{1},\ldots,z_{k})\right.\\
  &-p_{k+2}(y,z_{1},\ldots,z_{k},1-x-y-z_{1}-\cdots-z_{k})\bigg]\\
  &\times 
  \dfrac{z_{k+1}}{1-x-y-z_{1}-\cdots-z_{k}}+p_{k+3}(y,z_{1},\ldots,z_{k+1},x)\\
&=\dfrac{y}{x+y}\thinspace p_{k+2}(x+y,z_{1},\ldots,z_{k+1})
-p_{k+2}(y,z_{1},\ldots,z_{k},z_{k+1})\\
&+p_{k+3}(y,z_{1},\ldots,z_{k+1},x)\\
&=\dfrac{y}{x+y}\thinspace p_{k+2}(x+y,z_{1},\ldots,z_{k+1})\\
&-p_{k+3}(y,z_{1},\ldots,z_{k+1},1-x-y-z_{1}-\cdots-z_{k+1}),
\end{align*} 
as desired.
\end{proof}

\begin{lem}\label{prodrule}
Let $f>0$ $f'\ge0$, $f''\ge0$ and $g>0$, $g'> 0$, $g''>0$. Then $fg>0$, 
$(fg)'>0$, $(fg)''>0$.    
\end{lem}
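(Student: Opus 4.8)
The plan is to verify all three conclusions by direct computation using the product rule, checking the sign of each resulting term against the hypotheses. The first conclusion, $fg>0$, is immediate: a product of two positive quantities is positive.

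For the first derivative I would write $(fg)'=f'g+fg'$ and examine the two summands separately. Since $f'\ge0$ and $g>0$, the term $f'g$ is nonnegative; since $f>0$ and $g'>0$, the term $fg'$ is strictly positive. Hence the sum is strictly positive, giving $(fg)'>0$. The key observation here is that the strictness comes entirely from the factor $g$: even if $f'=0$, the summand $fg'$ survives as a strictly positive quantity.

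For the second derivative I would use $(fg)''=f''g+2f'g'+fg''$ and again track the three terms. By the hypotheses, $f''\ge0$ and $g>0$ make $f''g\ge0$; $f'\ge0$ and $g'>0$ make $2f'g'\ge0$; and $f>0$ and $g''>0$ make $fg''>0$. Summing, $(fg)''>0$, with strictness again guaranteed by the last term regardless of whether $f'$ or $f''$ vanishes.

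I do not anticipate any genuine obstacle, since the lemma is purely a bookkeeping exercise in the signs of product-rule terms; the only point requiring a moment's care is confirming that in each derivative at least one summand is strictly positive so that the conclusions are strict inequalities rather than merely nonstrict ones. The asymmetry in the hypotheses — $f$ allowed to be merely nondecreasing and convex, while $g$ must be strictly increasing and strictly convex — is exactly what the argument exploits to secure this strictness.
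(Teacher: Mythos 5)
Your proposal is correct and is exactly the argument the paper intends: the paper's proof simply states that the lemma is an immediate consequence of the product rule, and your explicit sign-checking of the terms in $(fg)'=f'g+fg'$ and $(fg)''=f''g+2f'g'+fg''$ fills in precisely that computation. The observation that strictness always comes from the term containing $g'$ or $g''$ is the right point of care.
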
  
\begin{proof}[Proof of Lemma]
The lemma is an immediate consequence of the product rule.
\end{proof}

We define a {\it fair bet} for a player to be a random variable $W$
(for wager) that is not identically 0 and whose expected value in 
chips is 0. We may let $W$ stand for either a player's gain or loss. In our context of 
poker tournaments, $W$ will be 
expressed as a fraction of the chips in play and can take on only 
finitely many values. Here we include
subsequent bets in the hand as part of the expected value.  Thus we are
interpreting the wager, which may be either initiated or accepted by the player,
to be the possible gain or loss in chips over the course of the rest of the hand.

Our first result is the following.
\begin{thm}\label{main}\hypertarget{main}
Suppose a tournament has prize money for $n$th place which is at least that
for $(n+1)$st place and that at least one player still in the
tournament will not earn as much as second place prize money.  Under
the Independent Chip Model, any fair bet in which only one other
player can gain or lose chips in the hand being played will lower the player's
expected prize money.
\end{thm}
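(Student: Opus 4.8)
The plan is to fix the player making the wager, say with chip fraction $x$, and the single opponent, with fraction $y$, and to regard a realized value $t$ of the wager $W$ as sending their fractions to $x+t$ and $y-t$ while leaving every other fraction unchanged (the total is conserved). Writing $\Pi(t)$ for the player's expected prize money at wager value $t$, the claim $E[\Pi(W)]<\Pi(0)$ will follow from Jensen's inequality once I show $\Pi$ is concave in $t$, and strictly so across the finitely many values $W$ takes. Since the prizes $\pi_1\ge\pi_2\ge\cdots\ge\pi_n\ge\pi_{n+1}=0$ are non-increasing, summation by parts gives $\Pi(t)=\sum_{m\ge1}(\pi_m-\pi_{m+1})F_m(t)$, where $F_m(t)$ is the probability the player finishes in the top $m$ and the coefficients are non-negative, so it suffices to prove each $F_m$ is concave.

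The concavity will rest on a single building block: for fixed $z_1,\dots,z_k$ the function $t\mapsto q_k(x+t;y-t;z_1,\dots,z_k)$ is concave, and strictly so. Indeed, in Lemma~\ref{partfrac} the first term $\tfrac{y}{x+y}\,p_{k+1}(x+y,z_1,\dots,z_k)$ has $x+y$ and the $z_i$ conserved by the wager, so under $y\mapsto y-t$ it becomes the affine function $\tfrac{y-t}{x+y}\,p_{k+1}(x+y,z_1,\dots,z_k)$ of $t$, with vanishing second derivative. The subtracted term is $p_{k+2}(y-t,z_1,\dots,z_k,w)$ with $w=1-x-y-z_1-\cdots-z_k$ also conserved. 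Viewed as a function of its first argument $a=y-t$, this $p_{k+2}$ is a positive constant times the product of the increasing factor $a$ with the reciprocals $(1-a)^{-1},\dots,(1-a-z_1-\cdots-z_k)^{-1}$ of the positive complementary partial sums, each of which is positive, increasing, and convex in $a$; Lemma~\ref{prodrule} then makes it increasing and strictly convex in $a$, so $-p_{k+2}(y-t,\dots)$ is strictly concave in $t$ and hence so is $q_k$.

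It remains to assemble $F_m$ from these blocks. Expanding $F_m$ over the player's finishing position and the ordered list of players ahead of her, any summand in which the opponent is \emph{not} among those ahead contains $x+t$ only in the final slot and contains no $y-t$, hence is linear in $t$. In every remaining summand the opponent finishes ahead of the player; here I peel off the players who finish ahead of the opponent, whose prefix probability is constant in $t$, and renormalize the residual chips by their (constant) total. On this residual pool the tail is a genuine ICM sub-tournament in which the opponent finishes first, the wager rescales to $\pm t$ times a constant, and the event that the player finishes within a prescribed number of places of the sub-tournament decomposes, according to which other players accompany the two in that top group, into a sum of \emph{entire} $q_k$ blocks. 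Each such block is concave by the previous paragraph (Lemma~\ref{partfrac} applies verbatim to the renormalized sub-tournament, and concavity survives the affine rescaling of $t$), so the opponent-ahead contribution is a non-negative combination of concave functions. Hence $F_m$, and with it $\Pi$, is concave.

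For strictness I use the hypothesis, which is exactly the statement that the prizes $\pi_2\ge\cdots\ge\pi_{n+1}=0$ are not all equal, i.e.\ a strict drop $\pi_m>\pi_{m+1}$ occurs at some position $m\ge2$. The player reaches position $m\ge2$ while finishing behind the opponent with positive probability, so the corresponding positive coefficient $\pi_m-\pi_{m+1}$ multiplies a strictly concave $q$-block; thus $\Pi$ is strictly concave on the support of $W$ and strict Jensen gives $E[\Pi(W)]<\Pi(0)$. I expect the assembly of the third paragraph to be the crux: the subtlety is that individual $p$-terms need not be concave, so the bookkeeping must be arranged so that only non-negative combinations of \emph{complete} $q_k$ blocks ever appear, and the conditioning and renormalization used when the opponent is not the overall chip leader must be handled with care.
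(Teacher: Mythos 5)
Your proposal reproduces the paper's proof essentially step for step: the telescoping decomposition of the prize structure into top-$m$ sub-tournaments, the split of each $F_m$ into linear terms (opponent not among those ahead) and renormalized $q_k$-blocks (opponent ahead, constant prefix peeled off), Lemma~\ref{partfrac} to isolate the nonlinearity in a single $p_{k+2}$ term, and Lemma~\ref{prodrule} to establish its strict convexity. The one slip is in your restatement of the hypothesis for the strictness step: the condition is that $\pi_2,\dots,\pi_N$ are not all equal ($N$ the number of remaining players), not $\pi_2,\dots,\pi_{N+1}$, since a drop occurring only at $m=N$ yields the sub-tournament paying all remaining players, where $F_N\equiv 1$ and the $q$-blocks are linear because the factor $1-x-y-z_1-\cdots-z_k$ in Lemma~\ref{partfrac} vanishes; you need the drop at some $m$ with $2\le m\le N-1$, which the actual hypothesis does supply and which the paper secures by setting aside the all-players-paid sub-tournament before asserting that at least one nontrivial sub-tournament remains.
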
    

\begin{proof}
We will first break down the expected prize winnings under the ICM
into a sum of simpler terms, each of which is either linear or 
concave down, allowing us to conclude Theorem~\hyperlink{main}{\ref{main}} by 
convexity.  

Consider a tournament paying prize money $m_{1}\ge m_{2}\ge \ldots \ge
m_{n}$ for finishing first, second, \ldots, $n$th.  Our first
reduction is view this as $n$ simultaneous sub-tournaments, the first a
winner-take-all paying $m_{1}-m_{2}$ for first place, the second
paying $m_{2}-m_{3}$ to the first and second place finishers, through
one paying $m_{n}$ to each of the top $n$ finishers.  It will suffice
to prove that, by participating in a fair bet, a player's expected 
winnings will not increase in any of these sub-tournaments and will lose expected
value in at least one of them.

Denote the player in question by $A$ and the opponent involved in the 
bet as $B$. Let $A$ have fraction $x$ of the total number of 
chips in play, let $B$ have fraction $y$, and let $w$ denote the 
fraction of all chips $A$ loses on the bet (negative when $A$ wins). We will use 
$u_{i}$ and $z_{i}$ as needed to denote the fraction of chips held by 
other players.
 
In any sub-tournament where all players get the same prize money 
(including those with prize 0), $A$'s expected winnings are that 
amount regardless of wagers.

For the winner-take-all sub-tournament, $A$'s expected value 
participating in the wager is
$$\left(m_{1}-m_{2}\right)E[x-w]=\left(m_{1}-m_{2}\right)x,$$
i.e. $A$'s expected value hasn't changed.

All remaining sub-tournaments, of which there is at least one, satisfy 
the conditions of the theorem. Thus, it suffices to prove the 
theorem for those tournaments where each of at least two winners gets a prize of 1 
and at least one nonwinner gets 0. After losing a wager $w$, the 
probability $A$ finishes in $m$th place behind players other than $B$ 
having chip fractions $u_{1},\ldots,u_{m-1}$ is
$$p_{m}(u_{1},\ldots,u_{m-1},x-w).$$
This is linear in $w$, so 
$$E[p_{m}(u_{1},\ldots,u_{m-1},x-w)]=p_{m}(u_{1},\ldots,u_{m-1},x).$$

On the other hand, in those scenarios in which player $B$ finishes ahead 
of player $A$, with both among the winners, the dependence on $w$ is nonlinear. We partition all 
such cases by
fixing the first $m$ finishers with $B$ in $m$th place and fixing the relative positions of all other 
finishers except player $A$. Denote the fraction of chips held by 
the first $m-1$ players by $u_{1},\ldots,u_{m-1}$ and those of the 
remaining $k$ players other than $A$ or $B$ by $z_{1},\ldots,z_{k}$, 
where $k=n-m-1$. Setting $\Delta=1-u_{1}-\cdots-u_{m-1}$, player $A$'s expected winnings may be written as
$$p_{m-1}(u_{1},\ldots,u_{m-1})\cdot 
q_{k}((x-w)/\Delta;(y+w)/\Delta;z_{1}/\Delta,\ldots,z_{k}/\Delta).$$
We may drop the leading term,
$p_{m-1}(u_{1},\ldots,u_{m-1}),$
and rescale units, dividing $w$, $x$, $y$, and $z_{i}$ by
$\Delta$.  This leaves us needing to show the
concavity of the simpler expression
$q_{k}(x-w;y+w;z_{1},\ldots,z_{k})$.

Applying Lemma~\ref{partfrac}, we see that 
\begin{multline*}
    q_{k}(x-w;y+w;z_{1},\ldots,z_{k})
    =\dfrac{y+w}{x+y}\thinspace p_{k+1}(x+y,z_{1},\ldots,z_{k})\\
    -p_{k+2}(y+w,z_{1},\ldots,z_{k},1-x-y-z_{1}-\cdots-z_{k}).
\end{multline*}    
    The first term is linear in $w$. The latter expands to
$$-\dfrac{(y+w)z_{1}z_{2}\cdots 
    z_{k}(1-x-y-z_{1}-\cdots-z_{k})}{(1-y-w)(1-y-w-z_{1})\cdots(1-y-w-z_{1}-\cdots-z_{k})}.
$$
Thus it suffices to show that
\begin{equation}\label{gk}\hypertarget{gk}
    g_{k}(w)=\dfrac{y+w}{(1-y-w)(1-y-w-z_{1})\cdots(1-y-w-z_{1}-\cdots-z_{k})}
\end{equation}
has positive second derivative. Observe that, for the range of 
relevant wagers, $-(1-\min\{x,y\})\le w\le (1-\min\{x,y\})$, 
$1/(1-y-w-z_{1}-\cdots-z_{j})$ satisfies the conditions of 
Lemma~\ref{prodrule}, as does
$$g_{0}(w)=\dfrac{y+w}{1-y-w}=\dfrac1{1-y-w}-1.$$ By induction, 
$g_{k}>0$, $g'_{k}>0$, $g''_{k}>0$ for all $k$.

Therefore, $q_{k}(x-w;y+w;z_{1},\ldots,z_{k})$ is concave down and by 
convexity, 
$$E[q_{k}(x-w;y+w;z_{1},\ldots,z_{k})]<q_{k}(x;y;z_{1},\ldots,z_{k}),$$
completing the proof of Theorem~\ref{main}.
\end{proof}

Theorem~\ref{main} is false for fair wagers among three or more 
players. With many players, counterexamples are unusual. On the other 
hand, they are easy to construct: start with a tournament paying 
two places and with three players, 
all participating in a fair wager. Barring the unlikely possibility 
that expected winnings for all three are unaffected by the wager, the 
expected winnings for at least one must increase. One could easily add 
one or more uninvolved players with very small chip stacks to the 
counterexample. We give another, explicit, counterexample following 
Theorem~\ref{bystander}.

We move on to examine the impact of a fair wager on the 
expected winnings of players not involved 
in the bet.

\begin{thm}\label{bystander}\hypertarget{bystander}
Suppose a tournament has prize money for $n$th place which is at least that
for $(n+1)$st place and that at least one player still in the
tournament will not earn as much as second place prize money.  Under
the Independent Chip Model, the expected prize money of any player 
not involved in a fair bet between two players will increase.
\end{thm}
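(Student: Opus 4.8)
The plan is to follow the reduction used in the proof of Theorem~\ref{main}. Writing the prizes as $m_1\ge\cdots\ge m_n$, I would split the tournament into the sub-tournaments in which the top $j$ finishers each receive $m_j-m_{j+1}$, and argue one sub-tournament at a time. Let $C$ be the uninvolved player, with chip fraction $c$, let $A$ and $B$ be the two bettors, with fractions $x$ and $y$, and let $w$ be the signed fraction that $A$ loses to $B$, so that $A$ holds $x-w$, $B$ holds $y+w$, the sum $x+y$ is fixed, and $E[w]=0$. In the winner-take-all sub-tournament ($j=1$), $C$'s expected prize is its probability of finishing first, namely $c$, which does not depend on $w$. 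So it suffices to show that for every sub-tournament with $2\le j\le n-1$ the map $w\mapsto \Pr(C\in\text{top }j)$ is convex, and strictly convex for at least one such $j$; Jensen's inequality with $E[w]=0$ then yields the theorem, just as convexity yielded Theorem~\ref{main}.

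The first thing I would try is to imitate Theorem~\ref{main} directly, splitting $\Pr(C\in\text{top }j)$ according to the positions of $A$ and $B$ relative to $C$. The scenarios with both $A$ and $B$ behind $C$ contribute a constant in $w$, since the relevant $p_m$ never involves the chips of $A$ or $B$. The scenarios with exactly one of $A,B$ ahead of $C$ reduce, after factoring out the fixed leading terms, to expressions of the same form as $g_k$ in \eqref{gk} (directly when $B$ is ahead, and after the substitution $w\mapsto -w$ when $A$ is ahead), hence are convex by Lemma~\ref{prodrule}. The obstacle is the scenarios with both $A$ and $B$ ahead of $C$: there the numerator carries a factor $(x-w)(y+w)$ while the chips of $A$ and $B$ enter the later denominators only through their fixed sum, and a short computation shows that such a term can be concave. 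A term-by-term argument therefore fails, and I expect the main work to be showing that these concave ``both ahead'' contributions are exactly dominated by the convex ``one ahead'' contributions.

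To make that cancellation transparent I would realize the ICM as a Plackett--Luce model: give each player an independent exponential clock whose rate is its chip fraction and let the finishing order be the increasing order of the clocks. The memoryless property reproduces the recursive definition of $p_k$ (the smallest clock finishes first with probability equal to its chip fraction, and so on), so this is the same model. Conditioning on $C$'s clock equalling $e$, the events ``player $P$ beats $C$'' become independent; $C\in\text{top }j$ exactly when at most $j-1$ of the other players beat $C$. Writing $\alpha=1-e^{-(x-w)e}$ and $\beta=1-e^{-(y+w)e}$ for the chances that $A$ and $B$ beat $C$, and letting $N_0$ count the remaining players who beat $C$ (a distribution independent of $w$, since only $A$ and $B$ carry $w$), the conditional probability becomes a constant plus a term linear in $\alpha,\beta$ plus a multiple of $\alpha\beta$. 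Differentiating twice in $w$, and using that $x+y$ is fixed so that raising $w$ lowers $A$'s clock-rate and raises $B$'s by equal amounts, I expect the second derivative to collapse to a manifestly nonnegative multiple of $\Pr(N_0=j-2)$. Integrating against the density of $C$'s clock then gives convexity, with strictness whenever $\Pr(N_0=j-2)>0$, which is achievable precisely when there are at least $j-2$ players besides $A,B,C$, that is when $j\le n-1$. Since the hypothesis that some player will not earn second-place money guarantees, as in Theorem~\ref{main}, an active sub-tournament with $2\le j\le n-1$, this gives the required strict increase and completes the argument.
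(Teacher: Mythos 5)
Your proposal is correct, and it proves the theorem by a genuinely different route than the paper. The paper never isolates the ``both $A$ and $B$ ahead of $C$'' orderings that you rightly flag as individually concave; instead it partitions according to which of $A$, $B$ finishes highest (treating only the case that $B$ beats both $A$ and $C$, the other case following under $w\mapsto -w$), fixes $B$'s position and the order of the other paid finishers, and then ``floats'' $A$ over the remaining positions exactly as in Theorem~\ref{main}. Lemma~\ref{partfrac} converts each floated sum into a term linear in $w$ plus a single product $p_{k+2}(y+w,z_{1},\ldots,z_{k},u)=z_{1}\cdots z_{k}u\,g_{k+1}(w)$, and Lemma~\ref{prodrule} gives convexity; the problematic orderings are absorbed into the float and never have to be handled on their own. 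Your exponential-clock (Plackett--Luce) realization of the ICM is legitimate --- memorylessness reproduces the recursive definition of $p_{k}$ --- and conditioning on $C$'s clock value $t$ does make the events ``$P$ beats $C$'' independent. The computation you sketch does collapse as claimed: with $\bar\alpha=e^{-(x-w)t}$ and $\bar\beta=e^{-(y+w)t}$ one gets $(\alpha+\beta)''=(\alpha\beta)''=-t^{2}(\bar\alpha+\bar\beta)$, so the coefficients combine to give a conditional second derivative equal to $t^{2}(\bar\alpha+\bar\beta)\Pr(N_{0}=j-2)\ge 0$, strictly positive precisely when $j-2$ does not exceed the number of players other than $A$, $B$, $C$; the hypothesis on the prize structure supplies such a sub-tournament. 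What your route buys is an exact, manifestly nonnegative expression for the second derivative, a transparent strictness criterion, and a structural explanation of why the result fails for bets among three or more players (the cross terms no longer cancel); what the paper's route buys is a purely algebraic argument that reuses the two lemmas already in place for Theorem~\ref{main} and requires no auxiliary probabilistic construction. Two small points to tidy: justify differentiating twice under the integral against $C$'s clock density (routine, since the conditional second derivative is bounded by $2t^{2}$), and avoid using the symbol $e$ simultaneously for the clock value and the exponential base.
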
 

\begin{proof}
The proof parallels that of Theorem~\ref{main}.  Let $A$ and $B$ 
denote the two players involved in the fair bet and let $C$ denote 
one player who is not involved. 

We break down $C$'s expected winnings into a sum of expected winnings
from sub-tournaments paying 1 to each of it winners.  The expected
winnings of $C$ in a winner-take-all sub-tournament is unaffected by a
fair bet.  
Similarly, they are unaffected in scenarios when neither
$A$ nor $B$ finishes ahead of $C$. It suffices to prove that $C$'s 
expected winnings increase when $B$ finishes ahead of both $A$ and 
$C$. 

For sub-tournaments paying the top two finishers, $C$'s expected 
winnings when $B$ finishes first are
$$\dfrac{(y+w)u}{1-y-w}=ug_{0}(w),$$
which we have seen is concave up. In this case we can actually 
conclude that $C$'s expected winnings must increase for any fair 
wager among two {\em or more} players.

As in the proof of Theorem~\ref{main}, we further break down to 
scenarios in which a fixed $m-1$ other players finish ahead of $B$, 
who finishes in $m$th place. Again, there is no loss of generality 
in assuming $B$ finishes first, with the top $k+2$ places paid, with 
$k\ge1$. 

Our final reduction is to scenarios 
where the order of the 
first $k$ finishers other than $A$, $B$, or $C$ are fixed.
Let $x$, $y$, and $u$ denote the 
respective fractions of all chips in play held by $A$, $B$, and $C$.
Let the fractions of the other relevant $k$ finishers be 
$z_{1},\ldots,z_{k}$. Let $w$ be the amount $B$ wins on the fair wager.
When $A$ also finishes in the money, we'll fix $C$'s position and 
``float'' $A$ as in the proof of Theorem~\ref{main}, summing over the 
different positions for $C$. To these we'll add 
those cases that $A$ does not finish in the money by floating $C$. 
Noting that only $k+2$ places are paid in this scenario, $C$'s
expected winnings are
\begin{multline*}
    \left[q_{k+1}(x-w;y+w;u,z_{1},\ldots,z_{k})-p_{k+3}(y+w,u,z_{1},\ldots,z_{k},x-w)\right]\\
    +\left[q_{k+1}(x-w;y+w;z_{1},u,z_{2},\ldots,z_{k})-p_{k+3}(y+w,z_{1},u,\ldots,z_{k},x-w)\right]\\
    +\cdots+\left[q_{k+1}(x-w;y+w;z_{1},\ldots,u,z_{k})-p_{k+3}(y+w,z_{1},\ldots,u,z_{k},x-w)\right]\\
    +q_{k}(u;y+w;z_{1},\ldots,z_{k}).
\end{multline*}    
We apply Lemma~\ref{partfrac} to each of the differences. The first is
\begin{align*}
  q_{k+1}&(x-w;y+w;u,z_{1},\ldots,z_{k})-p_{k+3}(y+w,u,z_{1},\ldots,z_{k},x-w)  \\
  =&\dfrac{y+w}{x+y}p_{k+2}(x+y,u,z_{1},\ldots,z_{k})\\
  &-p_{k+3}(y+w,u,z_{1},\ldots,z_{k},1-x-y-u-z_{1}-\cdots-z_{k})\\
  &-p_{k+3}(y+w,u,z_{1},\ldots,z_{k},x-w)\\
  =&\dfrac{y+w}{x+y}p_{k+2}(x+y,u,z_{1},\ldots,z_{k})
  -p_{k+2}(y+w,u,z_{1},\ldots,z_{k}),
\end{align*}    
with similar expressions for the other terms. From the definition of $q$, we can 
express $C$'s expected winnings as
\begin{align*}
\dfrac{y+w}{x+y}&\left[ q_{k}(u;x+y;z_{1},\ldots,z_{k})
-p_{k+2}(x+y,  z_{1},\ldots,z_{k}, u)\right]\\
&+p_{k+2}(y+w,z_{1},\ldots,z_{k},u).
\end{align*}    
The first term is linear in $w$. The second term is, 
essentially,
$$z_{1}\cdots z_{k}ug_{k+1}(w)$$ in the notation of equation (\ref{gk}) 
and is thus concave up,
completing the proof.
\end{proof}

As we saw in its proof, Theorem~\ref{bystander} holds for tournaments paying only 
two places for fair bets among three or more players, but is 
false in general.  Even then, counterexamples are quite rare.
In our counterexample below, the first three finishers win 1 unit. (Perhaps 
it's a satellite tournament to earn entry into another tournament.) 
The bet has two equally likely outcomes: A wins 16 units, B and C 
each lose 8 units or A loses 16 units, B and C each win 8 units. The 
expected winnings, to four decimal places, are given in the table below.
\vskip .3in
\begin{tabular}{|c|c|c|c|}
    \hline
    Player & Initial   & Initial  & Final 
     \\
    &   Chip Count &  Expected Winnings & 
    Expected Winnings  \\
    \hline
    A & 140 &  0.9952 &  0.9914  \\
    \hline
    B & \ \thinspace 10  &  0.5256  &  0.5316  \\
    \hline
    C & \ \thinspace 10  &  0.5256  &  0.5316    \\
    \hline
    D & \ \thinspace 50  &  0.9536  &  	0.9455  \\
    \hline
\end{tabular}
\vskip 10pt
However, there is an interesting special case, with which  we conclude this paper.
\begin{cor}\label{merge}\hypertarget{merge}
Under the assumptions of Theorem~\ref{bystander}, if two or more 
players each bet a fixed amount and the total of all bets is won by 
one of these players with probability proportional to the size of his 
or her wager, then the expected winnings of all players not involved 
in the bet increases. In 
particular, if the chips of two 
or more players are combined into a single player's stack, the 
expected winnings of all other players increase.
\end{cor}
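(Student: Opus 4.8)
The plan is to realize the described multi-player wager as a telescoping sequence of ordinary two-player fair bets and then invoke Theorem~\ref{bystander} for each in turn. Label the bettors $P_{1},\dots,P_{j}$ with respective stakes $b_{1},\dots,b_{j}$, and set $B_{i}=b_{1}+\cdots+b_{i}$ and $B=B_{j}$. For $i=1,\dots,j-1$ I would introduce the pairwise \emph{merge} bet in which the pot $B_{i}$ already accumulated by the pool $\{P_{1},\dots,P_{i}\}$ (held at that moment by a single player) is staked against $P_{i+1}$'s stake $b_{i+1}$, the combined amount $B_{i+1}$ going to the pool with probability $B_{i}/B_{i+1}$ and to $P_{i+1}$ with probability $b_{i+1}/B_{i+1}$. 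A one-line check shows each merge is a genuine fair two-player bet. Carrying out the merges leaves the whole pot $B$ with one player, and the telescoping product $\frac{b_{i}}{B_{i}}\prod_{t=i}^{j-1}\frac{B_{t}}{B_{t+1}}=\frac{b_{i}}{B}$ shows that $P_{i}$ holds it with probability $b_{i}/B$. Thus this sequence of pairwise bets reproduces exactly the proportional winner-take-all outcome of the hypothesis.

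Next I would run the merges one at a time, tracking $C$'s ICM expected winnings $V$ as a function of the current configuration; note that $C$, being outside $\{P_{1},\dots,P_{j}\}$, is a bystander at \emph{every} merge. The proof of Theorem~\ref{bystander} shows $C$'s winnings are, in the wager variable $w$, a sum of a linear part and a nonnegative multiple of $g_{k+1}(w)$, hence convex; since a fair bet has mean zero, Jensen's inequality gives $E[V\mid\text{pre-merge}]\ge V(\text{pre-merge})$ for \emph{any} configuration. The first merge begins from the given configuration, where every bettor has positive chips and the hypotheses of Theorem~\ref{bystander} hold verbatim, so that step is strict. Chaining the weak inequalities for the later merges via the tower property then yields $E[V(\text{final})]>V(\text{initial})$. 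The step requiring the most care is the bookkeeping at the intermediate merges, where the loser of an earlier merge now holds zero chips: I must confirm that the convexity above survives these degenerate configurations. It does, because the structural decomposition into a linear part plus a nonnegative multiple of $g_{k+1}(w)$ does not rely on any stake being positive, so Jensen still applies; and since strictness is needed only once and is supplied cleanly by the nondegenerate first merge, I never need to decide whether a zero-chip player is ``still in the tournament.''

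Finally, for the stack-combining statement I would specialize to $b_{i}$ equal to each player's entire stack, so that after the winner-take-all one bettor holds $B$ and the other $j-1$ hold $0$. A given player's ICM finishing probabilities depend only on the multiset of all chip fractions, with that player's own fraction distinguished, so $C$'s expected winnings are the same no matter which of $P_{1},\dots,P_{j}$ ends up with the combined stack. Hence the $j$ possible outcomes all assign $C$ the identical value, which is precisely the value of the deterministic merge. That common value therefore equals the expected value of the fair winner-take-all bet, which the general case has just shown strictly exceeds $C$'s pre-bet winnings. Consequently combining the stacks strictly increases $C$'s expected prize money as well.
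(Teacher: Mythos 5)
Your proof is correct and follows essentially the same route as the paper, whose entire argument is the observation that the proportional winner-take-all wager can be realized as a sequence of fair two-player bets, each covered by Theorem~\ref{bystander}. You supply the details the paper leaves implicit --- the explicit telescoping construction with its fairness and probability checks, the weak-versus-strict bookkeeping at the degenerate intermediate configurations, and the symmetry argument identifying the deterministic stack merge with the random proportional bet --- and all of these are sound.
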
 

\begin{proof}
The case of two players is a special case of Theorem~\ref{bystander}. 
The general case can be realized as a sequence of such fair wagers between 
two players. 
\end{proof}

\end{document}